\documentclass[letterpaper, 10pt, conference]{ieeeconf}
\usepackage[utf8]{inputenc} 
\DeclareUnicodeCharacter{00A0}{ }
\usepackage{graphicx}%
\usepackage{amsmath}%
\usepackage{amsfonts}%
\usepackage{amssymb}%
\usepackage{framed}
\IEEEoverridecommandlockouts
\overrideIEEEmargins



 
\def\boxit#1{\vbox{\hrule\hbox{\vrule\kern3pt
        \vbox{\kern3pt#1\kern3pt}\kern3pt\vrule}\hrule}}

\def\reals{ { {\rm  I \kern-0.15em R }  } }
\def\complex{ {\,{{\rm C} \kern-0.50em \raise0.20ex {  |}}\, }}

\def\Rbf{{\bf R}}

%
%


%

\def\Rxx{\Rbf_{\ssstyle X\kern-.1em X}}

\let\ssstyle=\scriptscriptstyle


\def\Kout{\setbox1=\hbox{\Huge\bf K}\hbox to
1.05\wd1{\hspace{.05\wd1}
\def\Sout{\setbox1=\hbox{\Huge\bf S}\hbox to 1.05\wd1{\hspace{.05\wd1}

\newtheorem{theorem}{Theorem}

\newtheorem{lemma}{Lemma}

\title{\LARGE \bf Online Algorithms for the Multi-Armed Bandit Problem with Markovian Rewards}
\author{Cem Tekin, Mingyan Liu
\thanks{C. Tekin and M. Liu are with the Dept. of Electrical Engineering and Computer Science, University of Michigan, Ann Arbor, MI 48105, \{cmtkn, mingyan\}@eecs.umich.edu.}
}

\begin{document}
\maketitle
\thispagestyle{empty}
\pagestyle{empty}

\begin{abstract}
We consider the classical multi-armed bandit problem with Markovian rewards. When played an arm changes its state in a Markovian fashion while it remains frozen when not played. The player receives a state-dependent reward each time it plays an arm. The number of states and the state transition probabilities of an arm are unknown to the player.  
The player's objective is to maximize its long-term total reward by learning the best arm over time. We show that under certain conditions on the state transition probabilities of the arms, a sample mean based index policy achieves logarithmic regret uniformly over the total number of trials. The result shows that sample mean based index policies can be applied to learning problems under the rested Markovian bandit model without loss of optimality in the order. Moreover, comparision between Anantharam's index policy and UCB shows that by choosing a small exploration parameter UCB can have a smaller regret than Anantharam's index policy.
\end{abstract}

\bibliographystyle{ieeetr}

\section{Introduction} \label{sec:intro}

In this paper we study the single player multi-armed bandit problem where the reward of each arm is generated by a Markov chain with unknown statistics, and the states of the Markov chain evolves only when the arm is played. We will investigate the performance of an index policy that depends only on the sample mean reward of an arm. 

In the classical multi-armed bandit problem, originally proposed by Robbins \cite{robbins2}, a gambler (or player) must decide which one of the $K$ machines (or arms) to activate (or play) at each discrete step in a sequence of trials so as to maximize his long term reward.  Every time he plays an arm, he receives a reward (or payoff).  The structure of the reward for each arm is unknown to the player {\em a priori}, but in most prior work the reward has been assumed to be independently drawn from a fixed (but unknown) distribution.  The reward distribution in general differs from one arm to another, therefore the player must use all his past actions and observations to essentially ``learn'' the quality of these arms (in terms of their expected reward) so he can keep playing the best arm.  

This problem is a typical example of the trade-off between {\em exploration} and {\em exploitation}.  On the one hand, the player needs to sufficiently explore or sample all arms so as to discover with accuracy the best arm and avoid getting stuck playing an inferior one erroneously believed to be the best.  On the other hand, the player needs to avoid spending too much time sampling the arms and collecting statistics and not playing the best arm often enough to get a high return.  

Within this context, the player's performance is typically measured by the notion of {\em regret}. It is defined as the difference between the expected reward that can be gained by an ``infeasible'' or ideal policy, i.e., a policy that requires either a priori knowledge of some or all statistics of the arms or hindsight information, and the expected reward of the player's policy.  The most commonly used infeasible policy is the {\em best single action} policy, that is optimal among all policies that continue to play the same arm.  An ideal policy could play for instance the arm that has the highest expected reward (which requires statistical information but not hindsight).  This type of regret is sometimes also referred to as the {\em weak regret}, see e.g., work by Auer et al. \cite{auer2}.  In this study we will only focus on this definition of regret. 

Most studies in this area assume iid rewards, with the notable exception of \cite{anantharam1}.  In \cite{lai1} rewards are modeled as single-parameter univariate densities. Under some conditions such as the denseness of the parameter space and continuity of the Kullback-Leibler number between two densities, Lai and Robbins \cite{lai1} give a lower bound on the regret and construct policies that achieve this lower bound which are called {\em asymptotically efficient} policies. This result is extended by Anantharam et al. in \cite{anantharam2} to the case where playing more than one arm at a time is allowed. Using a similar approach Anantharam et al. in \cite{anantharam1} develops index policies that are asymptotically efficient for arms with rewards driven by finite, irreducible, aperiodic Markov chains with identical state spaces and single-parameter families of stochastic transition matrices.  Agrawal in \cite{agrawal1} considers sample mean based index policies for the iid model that achieve $O(\log n)$ regret, where $n$ is the total number of plays, with a constant that depends on the Kullback-Leibler number. He imposes conditions on the index functions for which they can be treated as upper confidence bounds and generates these index functions for specific one-parameter family of distributions.   Auer et al. in \cite{auer} also proposes sample mean based index policies for iid rewards with bounded support; these are derived from \cite{agrawal1}, but are simpler than the those in \cite{agrawal1} and are not restricted to a specific family of distributions. These policies achieve logarithmic regret uniformly over time rather than asymptotically in time, but have bigger constant than that in \cite{lai1}.   \cite{auer} also proposes randomized policies that achieve logarithmic regret uniformly over time by using an {\em exploration factor} that is inversely proportional to time.

Other works such as \cite{liu2, anandkumar} consider the iid multiarmed bandit problem in the multiuser setting. Players selecting the same arms experience collision according to a certain collision model. In \cite{anandkumar} when a collision occurs on an arm, none of the players selecting that arm receive a reward. In \cite{liu2} an additional collision model is considered where one of the colliding players gets the reward.  Assume that there are $M$ players and $M$ is less than the number of arms $K$. The main idea underlying the policies used in such multi-user setting is to encourage the players to play the best $M$ arms, while playing the other arms only logarithmically.

Its worth noting that when the reward process is iid, the question of ``what happens to the arms that are not played'' does not arise.   This is because whether the unselected arms remain still (frozen in their current states) or transition to another state with a different reward is inconsequential; in either case the player does not obtain the reward from arms he does not play.  Since the rewards are independently drawn each time, remaining still or not does not affect the reward the arm produces the next time it is played.  This simplifies the problem significantly if the physical system represented by such a multiarmed bandit model is such that the arms cannot stay still (or are {\em restless}).  
This unfortunately is not the case with Markovian rewards.  There is a clear difference between whether the arms are rested or restless.  In the rested case, since the state is frozen when an arm is not played, 
the state we next observe the arm to be in is {\em independent} of how much time elapses before we play the arm again.   In the restless case, the state of an arm continues to evolve accordingly to the underlying Markov law regardless of the player's decision, but the actual state is not observed nor the reward obtained unless the arm is chosen by the player.  Clearly in this case 
the state we next observe it to be in is now {\em dependent} on the amount of time that elapses between two plays of the same arm.  This makes the problem significantly more difficult.  To the best of our knowledge, there has been no study of the restless bandits in this learning context.  In this paper we will only focus on the rested case. 

As \cite{anantharam1} is the most closely related to the present paper, we now elaborate on the differences between the two. 
In \cite{anantharam1} the rewards are assumed to be generated by rested Markov chains with transition probability matrices parametrized by a single index $\theta$.  This implies that for states $x,y$ in the state space of arm $i$ the transition probability from state $x$ to $y$ is given by $p_{xy}(\theta)$, where the player knows the function $p_{xy}(\theta)$ but not the value of $\theta$.  Because of this assumption the problem more or less reduces to a single-parameter estimation problem. Indices are formed in \cite{anantharam1}  using natural statistics to test the hypothesis that the rewards from an arm are generated by a parameter value less than $\theta$ or by $\theta$.  The method also requires log-concavity of $p$ in $\theta$ as an assumption 
in order to have a test statistic increasing in $\theta$.   
By contrast, in the present paper we do not assume the existence of such a single-parameter function $p$, or if it does exist it is unknown to the player.  We however do require that the Markovian reward process is reversible. Secondly, while \cite{anantharam1} assumes that arms have identical state spaces, our setting allows arms to have different state spaces.  Thirdly, there is no known recursive methods to compute the indices used in \cite{anantharam1} which makes the calculation hard, while our indices depend on the sample mean of the arms which can be computed recursively and efficiently. 
Finally, the bound produced in \cite{anantharam1} holds asymptotically in time, while ours (also logarithmic) holds uniformly over time. 
We do, however, use very useful results from \cite{anantharam1} in our analysis as discussed in more detail in subsequent sections. 
It should be noted that our results is not a generalization of that in \cite{anantharam1} since the two regret bounds, while of the same order, have different constants. 

Our main results are summarized as follows. 
\begin{enumerate}
\item We show that when each arm is given by a finite state irreducible, aperiodic, and reversible\footnote{Note that reversibility is actually not necessary for our main result, i.e., the logarithmic regret bound, to hold, provided that we use a large deviation bound from \cite{lezaud} rather than from \cite{gillman} as we have done in the present paper.} Markov chain with positive rewards, and under mild assumptions on the state transition probabilities of the arms, there exist simple sample mean based index policies that achieve logarithmic regret uniformly over time.  
\item We interpret the conditions on the state transition probabilities in a simple model where arms are modeled as two-state Markov chains with identical rewards.
\item We compare numerically the regret of our sample mean based index policy under different values of the exploration parameter. We also compare our policy with the index policy given in \cite{anantharam1}. 
\end{enumerate}

We end this introduction by pointing out another important class of multi-armed bandit problems solved by Gittins \cite{gittins1}.  The problem there is very different from the one considered in this study (it was referred to as the {\em deterministic} bandit problem by \cite{lai1}), in that the setting of \cite{gittins1} is such that the rewards are given by Markov chains whose statistics are perfectly known a priori. Therefore the problem is one of {\em optimization} rather than {\em exploration} and {\em exploitation}: the goal is to determine offline an optimal policy of playing the arms so as to maximize the total expected reward over a finite or infinite horizon. 
  
The remainder of this paper is organized as follows. In Section \ref{sec:problem} we formulate the single player rested Markovian bandit problem and relate the expected regret with expected number of plays from arms. In Section \ref{sec:policy} we propose an index policy based on \cite{auer} and analyze the regret of that policy in the rested Markovian model.  Discussion on this result is given in Section \ref{sec:discussion}. 
In Section \ref{sec:app} we give an application that can be modeled as a rested bandit problem and evaluate the performance of our policy for this application. Finally, Section \ref{sec:conc} concludes the paper.


\section{Problem Formulation and Preliminaries} \label{sec:problem}

We assume that there are $K$ arms indexed by $i = 1,2, \cdots, K$.  The $i$th arm is modeled as an irreducible Markov chain with finite state space $S^i$. Rewards drawn from a state of an arm is stationary and positive. Let $r^i_x$ denote the reward obtained from state $x$ of arm $i$. Let $P^i=\left\{p_{xy}^i, x,y \in S^i \right\}$ denote the transition probability matrix of arm $i$. We assume the arms (i.e., the Markov chains) are mutually independent.  The  mean reward from arm $i$, denoted by $\mu^i$, is the expected reward of arm $i$ under its stationary distribution $\boldsymbol{\pi}^i = (\pi^i_x, x\in S^i)$.  Then, 
\begin{eqnarray}
\mu^i=\displaystyle\sum_{x \in S^i} r^i_x \pi^i_x ~. 
\end{eqnarray}
For convenience, we will use $^*$ in the superscript to denote the arm with the highest mean.  For instance, $\mu^*=\max_{1\leq i\leq K} \mu^i$.  
For a policy $\alpha$ we define its regret $R^\alpha(n)$ as the difference between the expected total reward that can be obtained by playing the arm with the highest mean and the expected total reward obtained from using policy up to time $n$.  Let $\alpha(t)$ be the arm selected by the policy $\alpha$ at $t$, and $x_{\alpha(t)}$ the state of arm $\alpha(t)$ at time $t$.  Then we have
\begin{eqnarray}
R^\alpha(n)=n\mu^*-E^\alpha\left[\displaystyle\sum_{t=1}^n r^{\alpha(t)} _{x_{\alpha(t)}} \right] ~. 
\end{eqnarray}
The objective of the study is to examine how the regret $R^\alpha(n)$ behaves as a function of $n$ for a given policy $\alpha$, through appropriate bounding. 

Note that playing the arm with the highest mean is the optimal policy among all {\em single-action} policies.  It is, however, not in general the optimal policy among all stationary and nonstationary policies if all statistics are known a priori.  The optimal policy in this case (over an infinite horizon) is the Gittins index policy, first given by Gittins in his seminal paper \cite{gittins1}.  In the special case where rewards of each arm is iid, the optimal policy over all stationary and nonstationary policies is indeed the best single-action policy.   In this study we will restrict our performance comparison to the best single-action policy. 

To proceed, below we introduce a number of preliminaries that will be used in later analysis.  The key to bounding $R^\pi(n)$ is to bound the expected number of plays of any suboptimal arm.  
Let $T^{\alpha, i}(t)$ be the total number of times arm $i$ is selected by policy $\alpha$ up to time $t$.   
We first need to relate regret $R^\alpha(n)$ with $E^\alpha\left[T^{\alpha, i}(n)\right]$. 
We use the following lemma,  which is Lemma 2.1 from \cite{anantharam1}.  The proof is reproduced here for completeness. 

\begin{lemma}\label{lemma1}
[Lemma 2.1 from \cite{anantharam1}] Let $Y$ be an irreducible aperiodic Markov chain with a state space $S$, transition probability matrix $P$,  an initial distribution that is non-zero in all states, and 
a stationary distribution $\{\pi_x\}, \forall x\in S$. Let $F_t$ be the $\sigma$-field generated by random variables $X_1,X_2,...,X_t$ where $X_t$ corresponds to the state of the chain at time $t$.  Let $G$ be a $\sigma$-field independent of $F=\vee_{t \geq 1} F_t$, the smallest $\sigma$-field containing $F_1, F_2, ...$. Let $\tau$ be a stopping time with respect to the increasing family of $\sigma$-fields $\left\{G \vee F_t, t \geq 1\right\}$. Define  $N(x,\tau)$ such that
\begin{eqnarray*}
N(x,\tau)=\displaystyle\sum_{t=1}^\tau I(X_t = x).  
\end{eqnarray*}
Then $\forall \tau$ such that $E\left[\tau\right]<\infty$, we have 
\begin{eqnarray}
\left|E\left[N(x,\tau)\right]-\pi_xE\left[\tau\right]\right|\leq C_{P}, 
\end{eqnarray}
where $C_{P}$ is a constant that depends on $P$.
\end{lemma}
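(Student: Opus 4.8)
The plan is to reduce the statement to the optional stopping theorem applied to a martingale built from the solution of Poisson's equation for $P$.

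First, fix the target state $x$ and look for a function $g : S \to \mathbb{R}$ solving the Poisson equation $(Pg)(y) - g(y) = \pi_{x} - I(y = x)$ for every $y \in S$; equivalently $(I - P)g = \mathbf{1}_{x} - \pi_{x}\mathbf{1}$, where $\mathbf{1}$ is the all-ones vector and $\mathbf{1}_{x}$ the indicator of $\{x\}$. Since $Y$ is irreducible on the finite set $S$, the kernel of $I - P$ is spanned by $\mathbf{1}$ and the right-hand side integrates to zero against $\pi$ (because $\pi_{x} - \pi_{x}\sum_{y}\pi_{y} = 0$), so a solution exists; one may take $g = Z(\mathbf{1}_{x} - \pi_{x}\mathbf{1})$ with $Z$ the fundamental matrix of the chain. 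Fix any such $g$; being a function on a finite set it is bounded, and I set $B_{x} := \max_{y \in S}|g(y)| < \infty$, a quantity that depends only on $P$ (and $x$).

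Next, define for $t \ge 1$
\[
M_{t} \;=\; g(X_{t}) - g(X_{1}) - \sum_{s=1}^{t-1}\bigl(\pi_{x} - I(X_{s} = x)\bigr),
\]
with $M_{1} = 0$. I would verify that $\{M_{t}\}$ is a martingale with respect to $\{G \vee F_{t}\}$: each $M_{t}$ is $F_{t}$-measurable, and since $G$ is independent of $F$ and $Y$ is Markov, $E[g(X_{t+1}) \mid G \vee F_{t}] = (Pg)(X_{t})$, so substituting the Poisson equation gives $E[M_{t+1} - M_{t} \mid G \vee F_{t}] = (Pg)(X_{t}) - g(X_{t}) - (\pi_{x} - I(X_{t} = x)) = 0$. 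The increments satisfy $|M_{t+1} - M_{t}| \le 2B_{x} + 1$. Then I would invoke optional stopping at $\tau$: since $|M_{t \wedge \tau}| \le (2B_{x} + 1)\,\tau$ and $E[\tau] < \infty$, dominated convergence yields $E[M_{\tau}] = E[M_{1}] = 0$. Using $\sum_{s=1}^{\tau-1}\bigl(\pi_{x} - I(X_{s} = x)\bigr) = (\tau - 1)\pi_{x} - N(x,\tau) + I(X_{\tau} = x)$ and rearranging $E[M_{\tau}] = 0$ gives
\[
E[N(x,\tau)] - \pi_{x}E[\tau] \;=\; E[g(X_{1})] - E[g(X_{\tau})] - \pi_{x} + P(X_{\tau} = x),
\]
whose right-hand side is at most $2B_{x} + 2$ in absolute value. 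Taking $C_{P} := 2\max_{x \in S}B_{x} + 2$, finite since $S$ is finite and depending only on $P$, completes the argument.

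The step I expect to require the most care is the measure-theoretic bookkeeping: confirming that the auxiliary $\sigma$-field $G$ — independent of the entire filtration $F$ — does not disturb the Markov identity $E[g(X_{t+1}) \mid G \vee F_{t}] = (Pg)(X_{t})$, and checking the integrability hypothesis that justifies the optional stopping theorem, which is precisely where $E[\tau] < \infty$ and the finiteness of $S$ (hence boundedness of $g$) enter. Irreducibility of the finite chain is what makes a bounded solution of Poisson's equation available; aperiodicity and the full-support initial distribution are not actually needed for this particular estimate.
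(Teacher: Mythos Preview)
Your argument is correct, and it takes a genuinely different route from the paper's. The paper proceeds via a regenerative decomposition: it defines the successive return times $\tau_k$ to the initial state $X_1$, invokes the regenerative cycle theorem so that the blocks $B_k$ between returns are i.i.d.\ with $E[N(x,B_1)]=\pi_x\,E[l(B_1)]$, extends $\tau$ to the next return time $T=\tau_\kappa$, and then applies Wald's identity together with the bound $E[T-\tau]\le C_P$ (the maximal mean return time). Your approach instead linearises the counting functional through the Poisson equation $(I-P)g=\mathbf{1}_x-\pi_x\mathbf{1}$ and reads off the estimate from optional stopping of the associated martingale. Both are standard devices; yours is arguably cleaner and makes transparent that aperiodicity and the positive initial distribution play no role. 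The paper's approach, on the other hand, yields a more immediately interpretable constant: its $C_P$ is the maximal mean return time, which feeds directly into Corollary~1 ($C_P\le 1/\pi_{\min}$), whereas your $2\max_x B_x+2$ is expressed through the fundamental matrix and would require an extra step to recover the same explicit bound. The measure-theoretic point you flag --- that independence of $G$ from $F$ forces $E[g(X_{t+1})\mid G\vee F_t]=E[g(X_{t+1})\mid F_t]$ --- is exactly right and is the only place the enlarged filtration matters.
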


\begin{proof} 
Define $\left\{F_t, t\geq 1\right\}$ stopping times
\begin{eqnarray*}
\tau_k &=& \inf \left\{t > \tau_{k-1} | X_t=X_1\right\}, k=1,2,...\\
\tau_0 &=& 1.
\end{eqnarray*}

Because of irreducibility, $\tau_k < \infty$. Define $B_k$ as the $k$th block. For a sample path $w$ it is the sequence $(x_{\tau_{k-1} (w)},  x_{\tau_{k-1} (w)+1}, \cdots, x_{\tau_{k} (w)-1})$.
Then,
\begin{eqnarray*}
F_{\tau_k}=\sigma(B_1,...,B_k).
\end{eqnarray*}

Let $S^*=\cup_{t \geq 1} S^t$ be the Borel $\sigma$-field of the discrete topology. For $x,y \in S, \mathbf{y}=(y_1,y_2,...,y_t) \in S^*$, let $l(\mathbf{y})$ be the length of $\mathbf{y}$. Then by the regenerative cycle theorem sequence $\left\{B_k\right\}$ is i.i.d and
\begin{eqnarray*}
EN(x,B_1)=\pi_x El(B_1)
\end{eqnarray*}

Let $T=\inf \left\{t > \tau | X_t=X_1\right\}$. Then $T=\tau_{\kappa}$ where $\kappa$ is a stopping time of $F_{\tau_\kappa}$ and $\left\{\tau_{\kappa-1} \leq \tau\right\} \in F_{\tau_{\kappa-1}}$. By Wald's lemma,
\begin{eqnarray*}
E \displaystyle\sum_{t=1}^{T-1} I(X_t=x) = E \displaystyle\sum_{k=1}^{\kappa} N(x,B_k) = \pi_x El(B_1)E\kappa.
\end{eqnarray*}
\begin{eqnarray*}
E(T-1)=E\displaystyle\sum_{k=1}^{\kappa} l(B_k)=El(B_1)E\kappa.
\end{eqnarray*}

Again by irreducibility since the mean time to return to any state starting from $X_\tau$ is finite $E(T-\tau) \leq C_P$. Then for any $x \in S$,
\begin{eqnarray*}
&& N(x,T)-(T-\tau) \leq N(x,\tau)< N(x,T) \\
&& \pi_x E(T-1)-C_P\leq EN(x,\tau) \leq \pi_x E(T-1)+1 \\
&& \pi_x E\tau-C_P \leq E N(x,\tau) \leq \pi_x E(\tau) +C_P \\
&& |E N(x,\tau) - \pi_x E\tau| \leq C_P. \\
\end{eqnarray*}

\end{proof}

We are now ready to establish a relationship between the regret $R^\alpha(n)$ and $E^\alpha\left[T^{\alpha, i}(n)\right]$.

\begin{lemma}\label{lem:lemma2}
If the reward of each arm is given by a Markov chain satisfying the properties of Lemma \ref{lemma1},  then under any policy $\alpha$ for which the expected time between two successive samples from an arm is finite, we have
\begin{eqnarray}
R^\alpha(n) \leq  \displaystyle\sum_{i=1}^K (\mu^*-\mu^i)E^{\alpha}\left[T^{\alpha, i}(n)\right] + C_{\mathbf{S, P, r}} ~, 
\end{eqnarray}
where $C_{\mathbf{S, P, r}}$ is a constant that depends on all the state spaces $S^i$, transition probability matrices $P^i$, and the set of rewards $\mathbf{r}^i$, $i=1, \cdots, K$.
\end{lemma}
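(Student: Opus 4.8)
The plan is to decompose the expected total reward arm by arm, express the reward collected from each arm in terms of the number of visits to each of its states, and then apply Lemma~\ref{lemma1} to each arm with the stopping time taken to be the number of times that arm has been played.

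Concretely, for each arm $i$ and each state $x\in S^i$ let $N^i(x,n)=\sum_{t=1}^n I(\alpha(t)=i,\ x_{\alpha(t)}=x)$ denote the number of trials up to $n$ in which arm $i$ is played and found in state $x$. Because the arm is \emph{rested}, its state is frozen whenever it is not chosen, so the successive states observed on arm $i$ form exactly a sample path $X^i_1, X^i_2,\dots$ of its Markov chain $P^i$, and the first $n$ global trials contain precisely the first $T^{\alpha,i}(n)$ plays of arm $i$. Hence $N^i(x,n)=\sum_{m=1}^{T^{\alpha,i}(n)} I(X^i_m=x)$; this is exactly the quantity $N(x,\tau)$ of Lemma~\ref{lemma1} for the chain of arm $i$ with the choice $\tau=T^{\alpha,i}(n)$. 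Writing the total reward as $\sum_{t=1}^n r^{\alpha(t)}_{x_{\alpha(t)}}=\sum_{i=1}^K\sum_{x\in S^i} r^i_x\, N^i(x,n)$ and taking expectations, the problem reduces to controlling $E^\alpha[N^i(x,n)]$.

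To invoke Lemma~\ref{lemma1}, let $F^i_m$ be the $\sigma$-field generated by the first $m$ observed states of arm $i$, and let $G^i$ be the $\sigma$-field generated by the policy's external randomization together with the Markov chains of all other arms; by the assumed mutual independence of the arms and of the randomization, $G^i$ is independent of $\bigvee_m F^i_m$. If $\sigma_m$ denotes the global time of the $m$-th play of arm $i$, then the decision producing that play uses only information available strictly before it, so $\{\sigma_m\le n\}\in G^i\vee F^i_{m-1}$, and therefore $\{T^{\alpha,i}(n)=m\}=\{\sigma_m\le n\}\setminus\{\sigma_{m+1}\le n\}\in G^i\vee F^i_m$; thus $T^{\alpha,i}(n)$ is a stopping time of $\{G^i\vee F^i_m\}_{m\ge 1}$. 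The hypothesis on $\alpha$ (finite expected time between successive samples of an arm) guarantees these quantities are well defined, and in any case $T^{\alpha,i}(n)\le n$ gives $E^\alpha[T^{\alpha,i}(n)]<\infty$. (If arm $i$ is never played by time $n$ the bound below holds trivially.) Lemma~\ref{lemma1} then yields $\bigl|E^\alpha[N^i(x,n)]-\pi^i_x\, E^\alpha[T^{\alpha,i}(n)]\bigr|\le C_{P^i}$ for every $x\in S^i$.

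Assembling the bounds and using $r^i_x\ge 0$ (the rewards are positive), so that $r^i_x N^i(x,n)\ge r^i_x(\pi^i_x E^\alpha[T^{\alpha,i}(n)]-C_{P^i})$,
\begin{eqnarray*}
E^\alpha\!\left[\sum_{t=1}^n r^{\alpha(t)}_{x_{\alpha(t)}}\right]
&\ge& \sum_{i=1}^K \sum_{x\in S^i} r^i_x\bigl(\pi^i_x\, E^\alpha[T^{\alpha,i}(n)] - C_{P^i}\bigr) \\
&=& \sum_{i=1}^K \mu^i\, E^\alpha[T^{\alpha,i}(n)] - C_{\mathbf{S, P, r}},
\end{eqnarray*}
where $C_{\mathbf{S, P, r}}:=\sum_{i=1}^K C_{P^i}\sum_{x\in S^i} r^i_x$ depends only on the state spaces, the transition matrices (through the $C_{P^i}$, which by Corollary~\ref{corollary1} are at most $1/\pi^i_{\min}$), and the rewards. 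Since exactly one arm is played per trial, $\sum_{i=1}^K E^\alpha[T^{\alpha,i}(n)]=n$, hence $n\mu^*=\sum_{i=1}^K \mu^*\, E^\alpha[T^{\alpha,i}(n)]$, and substituting into $R^\alpha(n)=n\mu^*-E^\alpha[\sum_t r^{\alpha(t)}_{x_{\alpha(t)}}]$ gives the claim. The main obstacle I anticipate is the middle step: rigorously justifying that the state sequence observed on each arm is a genuine trajectory of that arm's Markov chain and that $T^{\alpha,i}(n)$ is a stopping time with respect to a filtration of the special form (a chain filtration augmented by an independent $\sigma$-field) required by Lemma~\ref{lemma1}. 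This is exactly where the rested assumption is indispensable --- for restless arms the elapsed time between plays would couple the observed states to past decisions, and this decomposition would fail.
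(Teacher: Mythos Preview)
Your proposal is correct and follows essentially the same route as the paper: decompose the total reward arm by arm and state by state, recognize that $T^{\alpha,i}(n)$ is a stopping time with respect to $\{G^i\vee F^i_m\}$ (the paper takes $G^i=\bigvee_{j\neq i}F^j$), apply Lemma~\ref{lemma1} per state, and sum with weights $r^i_x$ to obtain the same constant $C_{\mathbf{S,P,r}}=\sum_i C_{P^i}\sum_{x\in S^i} r^i_x$. The only cosmetic difference is that the paper keeps the two-sided estimate $\bigl|R^\alpha(n)-(n\mu^*-\sum_i\mu^i E[T^{\alpha,i}(n)])\bigr|\le C_{\mathbf{S,P,r}}$ via the triangle inequality, whereas you extract just the one-sided inequality needed for the lemma; both rely on $r^i_x\ge 0$ in the same way, and your explicit use of $\sum_i E[T^{\alpha,i}(n)]=n$ is the step the paper leaves implicit.
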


\begin{proof} 
Let $G^i = \vee_{j \neq i} F^j$. Since arms are independent $G^i$ is independent of $F^i$ where $F^i$ follows the definition in Lemma \ref{lemma1} applied to the $i$th arm.  Note that $T^{\alpha, i}(n)$ is a stopping time with respect to $\left\{G^i \vee F^i_n, n\geq 1\right\}$.  Let $X^i(1),...,X^i(T^{\alpha,i}(n))$ denote the successive states observed from arm $i$ up to $n$. Thus, $X^i(t)$ is the $t^{th}$ observation from arm $i$. Then the total reward obtained under policy $\alpha$ up to time $n$ is given by: 
\begin{eqnarray*}
\displaystyle  \sum_{t=1}^n r^{\alpha(t)}_{x_{\alpha(t)}} = \displaystyle\sum_{i=1}^K \sum_{j=1}^{T^{\alpha,i}(n)}\sum_{y\in S^i} r^i_y I(X^i(j)=y) ~. 
\end{eqnarray*}

By definition, the regret is $R^\alpha(n)=n\mu^*-E^\alpha\left[\displaystyle\sum_{t=1}^n r^{\alpha(t)}_{x_{\alpha(t)}} \right]$. Therefore 
\begin{eqnarray}
&&  \left|R^\alpha (n)-\left(n\mu^*- \sum_{i=1}^K \mu^iE^\alpha \left[T^{\alpha, i}(n)\right]\right)\right| \\
&=& \left|E^\alpha\left[\displaystyle\sum_{i=1}^K \sum_{j=1}^{T^{\alpha, i}(n)} \sum_{y\in S^i} r^i_y I(X^i(j)=y)\right] \right. \nonumber \\
&&  - \left. \displaystyle\sum_{i=1}^K\sum_{y \in S^i} r^i_y \pi^i_y E^\alpha \left[T^{\alpha, i}(n)\right] \right| 
\nonumber \\
&\leq&  \displaystyle\sum_{i=1}^K \sum_{y\in S^i} \left| E^\alpha \left[\sum_{t=1}^{T^{\alpha, i}(n)} r^i_y I(X^i(j)=y)\right] \right. \nonumber \\
&&  -  \left. r^i_y \pi^i_y E^\alpha \left[T^{\alpha, i}(n)\right]  \right|  \nonumber \\
&=& \displaystyle\sum_{i=1}^K \sum_{y \in S^i} r^i_y \left|E^\alpha \left[N(y,T^{\alpha, i}(n))\right]-\pi^i_y  E^\alpha \left[T^{\alpha, i}(n)\right]\right|  \nonumber \\
&\leq&  \displaystyle\sum_{i=1}^K\sum_{y \in S^i} r^i_y C_{P^i} = C_{\mathbf{S, P, r}} ~, 
\end{eqnarray}
where the first inequality follows from the triangle inequality and the fact that random variables corresponding to the states of the Markov chains are independent of the policy $\alpha$, the second equality follows since $T^{\alpha, i}(n)$ is a stopping time with respect to $\left\{G^i \vee F^i_n, n\geq 1 \right\}$, and the last inequality follows form Lemma \ref{lemma1}.
\end{proof}

Intuitively Lemma \ref{lem:lemma2} is quite easy to understand.  It states that the regret of any policy (its performance difference from always playing the best arm) is bounded by the sum over the expected differences from playing each non-optimal arms, subject to a constant. 

\section{An Index Policy and Its Regret Analysis} \label{sec:policy}

We consider the following sample-mean based index policy proposed by \cite{auer}, referred to as the UCB (upper confidence bound) policy. 

Denote by $r^i(k)$ the sample reward from arm $i$ when it is played for the $k$th time, 
and by $T^i(n)$ the total number of times arm $i$ has been played up to time $n$.  Then the sample mean reward from playing arm $i$ is given by $\bar{r}^i (T^i(n)) =\frac{r^i(1)+r^i(2)+...+r^i(T^i(n))}{T^i(n)}$.  
The policy defines an index for each arm, denoted by $g^i_{n,T^i(n)}$ for arm $i$, and plays at each time the arm with the highest index. 
%
%

The index is updated as follows.  Initially each arm is played exactly once.  For each arm played, its sample mean is updated; this corresponds to the first term of the index. If an arm is not played, then the uncertainty about the mean of the arm is updated; this corresponds to the second term of the index.  This algorithm is illustrated below. 
%

\begin{framed}
\noindent\textbf{UCB (Upper Confidence Bound)} \\
Initialization: $n=1$\\
for ($n \leq K$) \\
\indent play arm $n$; $n=n+1$. \\
while ($n>K$) \\ 
%
\indent $\bar{r}^i (T^i(n)) =\frac{r^i(1)+r^i(2)+...+r^i(T^i(n))}{T^i(n)}$; \\
\indent $g^i_{n,T^i(n)}= \bar{r}^i(T^i(n)) + \sqrt{\frac{L\ln n}{T^i(n)}},  \forall i$. \\
%
\indent Play the arm with the highest index. \\
\indent $n=n+1$. 
\end{framed}

In the index policy of \cite{auer} the constant $L$ is set to $2$. 
Below we will show that the regret of this policy grows at most logarithmically in $n$. To do this we will bound the expected number of plays of any non-optimal arm (with mean less than the mean of the best arm). We will use the following lemma by Gillman \cite{gillman}, which bounds the probability of a large deviation from the stationary distribution.

\begin{lemma}\label{lemma3} 
[Theorem 2.1 from \cite{gillman}] Consider a finite-state, irreducible, aperiodic and reversible Markov chain with state space $S$, matrix of transition probabilities $P$, and an initial distribution $\mathbf{q}$. Let $N_{\mathbf{q}}=\left\|(\frac{q_x}{\pi_x}, x\in S)\right\|_2$. Let $\epsilon=1-\lambda_2$, where $\lambda_2$ is the second largest eigenvalue of the matrix $P$. $\epsilon$ will be referred to as the eigenvalue gap. Let $A \subset S$. Let $t_A(n)$ be the number of times that states in the set $A$ are visited up to time $n$. Then for any $\gamma \geq 0$, we have
\begin{eqnarray}
P(t_A(n)-n \pi_A \geq \gamma) \leq (1+\frac{\gamma \epsilon}{10n})N_{\mathbf{q}}e^{-\gamma^2 \epsilon/20n},
\end{eqnarray}
where 
\begin{eqnarray*}
\pi_A=\displaystyle\sum_{x \in A} \pi_x.
\end{eqnarray*}
\end{lemma}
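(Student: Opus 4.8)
The plan is to run the standard Chernoff / moment-generating-function argument for functions of a Markov chain, letting the spectral gap enter through an eigenvalue perturbation estimate.

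Fix an exponential tilt $\delta \ge 0$ and let $U=U(\delta)$ be the diagonal matrix with $U_{xx}=e^{\delta}$ for $x\in A$ and $U_{xx}=1$ otherwise. Writing $t_A(n)=\sum_{t=1}^{n}I(X_t\in A)$ and summing over sample paths gives the identity
\begin{eqnarray*}
E_{\mathbf{q}}\!\left[e^{\delta t_A(n)}\right]\;=\;\mathbf{q}^{\top}U\,(PU)^{\,n-1}\mathbf{1},
\end{eqnarray*}
so the task reduces to controlling the $(n-1)$-fold product of $PU$. By reversibility $P$ is self-adjoint on $\ell_2(\pi)$, so $PU$ is similar to the symmetric matrix $U^{1/2}\bigl(\,\mathrm{diag}(\sqrt{\pi_x})\,P\,\mathrm{diag}(1/\sqrt{\pi_x})\,\bigr)U^{1/2}$; its spectral radius $\rho(\delta)$ therefore equals its largest eigenvalue, with $\rho(0)=1$ and a gap $\epsilon=1-\lambda_2$ to the rest of the spectrum. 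Changing measure from $\mathbf{q}$ to the stationary chain and applying Cauchy--Schwarz turns the identity into $E_{\mathbf{q}}[e^{\delta t_A(n)}]\le N_{\mathbf{q}}\,\rho(\delta)^{\,n}$ up to lower-order factors, which is where the normalization $N_{\mathbf{q}}=\|(q_x/\pi_x)\|_2$ enters.

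The crux is a perturbation bound for $\rho(\delta)$ near $\delta=0$. At $\delta=0$ the top eigenvalue is $1$, simple, with Perron vector proportional to $\mathbf{1}$, and the rest of the spectrum lies below $1-\epsilon$. Treating $U(\delta)-I$ as an $O(\delta)$ nonnegative perturbation and using the gap $\epsilon$ to bound the component of the perturbed Perron vector orthogonal to $\mathbf{1}$, a second-order Rayleigh-quotient estimate yields $\rho(\delta)\le 1+\pi_A(e^{\delta}-1)+c\,\delta^{2}/\epsilon$ for $\delta$ in a bounded range and an absolute constant $c$. Raising to the $n$th power, using $e^{\delta}-1\le\delta+\delta^{2}$ on a bounded interval to absorb the $n\pi_A$ term, and applying Markov's inequality,
\begin{eqnarray*}
P\!\left(t_A(n)-n\pi_A\ge\gamma\right)\;\le\;e^{-\delta(n\pi_A+\gamma)}\,E_{\mathbf{q}}\!\left[e^{\delta t_A(n)}\right]\;\le\;\bigl(1+\tfrac{\delta\epsilon}{20}\bigr)N_{\mathbf{q}}\,e^{-\delta\gamma+n\delta^{2}/(20\epsilon)};
\end{eqnarray*}
choosing $\delta=\gamma\epsilon/(10n)$ (clipped to $\delta\le 1$) and simplifying then reproduces the prefactor $1+\gamma\epsilon/(10n)$ and the exponent $-\gamma^{2}\epsilon/(20n)$ in the statement.

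The step I expect to be the real obstacle is obtaining the eigenvalue perturbation estimate for $\rho(\delta)$ with the \emph{explicit} absolute constants ($1/10$ and $1/20$) and valid over a genuine interval of $\delta$ rather than only to leading order: the naive expansion gives only the correct order in $\delta$ and $1/\epsilon$, and pinning down the constants (and verifying the perturbed Perron eigenvalue is well-defined throughout the range) requires the careful resolvent / complex-analytic argument of Gillman \cite{gillman}, later streamlined and sharpened by Lezaud \cite{lezaud}. Since the lemma is quoted verbatim as Theorem~2.1 of \cite{gillman}, in the present paper it is simply invoked; the above is the route one would take to reconstruct its proof.
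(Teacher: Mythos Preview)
Your proposal is correct: the paper does not prove this lemma at all but simply refers the reader to Theorem~2.1 of \cite{gillman}, exactly as you anticipate in your final paragraph. The sketch you give of the underlying argument --- Chernoff tilt, reduction to the top eigenvalue of $PU$ via reversibility and Cauchy--Schwarz, and a quantitative perturbation of that eigenvalue using the gap $\epsilon$ --- is indeed the route taken in \cite{gillman}, so there is nothing to compare against beyond noting that the paper treats the result as a black box.
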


\begin{proof} 
See Theorem 2.1 of \cite{gillman}.
\end{proof}


We now state the main theorem of this section.  The proof follows similar methods as used in \cite{auer}. 

\begin{theorem}
\label{theorem1}
Assume all arms are modeled as finite state, irreducible, aperiodic, and reversible Markov chains, and assume all rewards are positive. 
Let $\pi_{\min} = \min_{x \in S^i, 1 \leq i \leq K} \pi^i_x$, $r_{\max}=\max_{1\leq i \leq K, x \in S^i} r^i_x$, $r_{\min}=\min_{1\leq i \leq K, x \in S^i} r^i_x$, $S_{\max} = \max_{1\leq i \leq K} |S^i|$, $\epsilon_{\max}= \max_{1\leq i \leq K} \epsilon^i$, $\epsilon_{\min}= \min_{1\leq i \leq K} \epsilon^i$, where $\epsilon^i$ is the eigenvalue gap of the $i$th arm.   Then using a constant $L \geq 90 S^2_{\max} r^2_{\max}/\epsilon_{\min}$, the regret of the UCB policy can be bounded above by
\begin{eqnarray}
R(n) &\leq&  4L\displaystyle\sum_{i: \mu^i < \mu^*} \frac{\ln n}{(\mu^*-\mu^i)} +  \displaystyle\sum_{i: \mu^i < \mu^*} (\mu^*-\mu^i) C^i \nonumber \\
&& +  C_{\mathbf{S,P,r}} ~, 
\end{eqnarray}
where 
\begin{eqnarray}
C^i &=& 1+(D^i+D^*)\beta, \nonumber \\
D^i &=& \frac{|S^i|}{\pi_{\min}} \left(1+ \frac{\epsilon_{\max}\sqrt{L}}{10 |S^i| r_{\min}}\right), \nonumber \\ \beta &=& \sum_{t=1}^{\infty} t^{-2}. \nonumber
\end{eqnarray}
\end{theorem}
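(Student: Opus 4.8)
The plan is to follow the standard UCB analysis of Auer et al.\ \cite{auer}, but with the i.i.d.\ Chernoff/Hoeffding bounds replaced by the Markov-chain large deviation bound of Lemma \ref{lemma3} (Gillman's inequality). By Lemma \ref{lem:lemma2} it suffices to bound $E[T^{\alpha,i}(n)]$ for each suboptimal arm $i$ by $4L\ln n/(\mu^*-\mu^i)^2 + C^i$, since plugging this into Lemma \ref{lem:lemma2} yields exactly the claimed regret bound (note the $(\mu^*-\mu^i)$ in the numerator of the first term of $R(n)$ comes from multiplying $(\mu^*-\mu^i)$ by the $1/(\mu^*-\mu^i)^2$ in the count bound). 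So the entire argument reduces to the counting estimate.

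First I would fix a suboptimal arm $i$ and an integer $\ell$ (to be chosen as $\lceil 4L\ln n/(\mu^*-\mu^i)^2\rceil$), and write, exactly as in \cite{auer},
\begin{eqnarray*}
T^i(n) \leq \ell + \sum_{t=K+1}^{n} I\!\left(\alpha(t)=i,\ T^i(t-1)\geq \ell\right).
\end{eqnarray*}
On the event $\{\alpha(t)=i\}$ the index of $i$ is at least that of the optimal arm, so this forces one of three things: (a) the sample mean of the optimal arm underestimates $\mu^*$ by more than its confidence radius, i.e.\ $\bar r^*(T^*(t-1)) \leq \mu^* - \sqrt{L\ln t / T^*(t-1)}$; (b) the sample mean of arm $i$ overestimates $\mu^i$ by more than its confidence radius, i.e.\ $\bar r^i(T^i(t-1)) \geq \mu^i + \sqrt{L\ln t / T^i(t-1)}$; or (c) the confidence intervals are still too wide, $\mu^* - \mu^i < 2\sqrt{L\ln t/T^i(t-1)}$. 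The choice of $\ell$ makes (c) impossible once $T^i(t-1)\geq\ell$. Summing the probabilities of (a) and (b) over all $t\leq n$ and all possible values $s$ of the play counts gives
\begin{eqnarray*}
E[T^i(n)] \leq \ell + \sum_{t=1}^{\infty}\sum_{s=1}^{t} \Big( P\big(\bar r^*(s) \leq \mu^* - \textstyle\sqrt{L\ln t/s}\big) + P\big(\bar r^i(s) \geq \mu^i + \sqrt{L\ln t/s}\big)\Big).
\end{eqnarray*}

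The heart of the proof — and the place that genuinely departs from \cite{auer} — is bounding the two deviation probabilities using Lemma \ref{lemma3} rather than Hoeffding. Here $\bar r^i(s)$ is an average of $s$ rewards drawn along the Markov trajectory of arm $i$; a deviation $\bar r^i(s)-\mu^i \geq a$ translates, via $\mu^i = \sum_y r^i_y\pi^i_y$ and boundedness of rewards by $r_{\max}$, into an event that the visit counts $t_A(s)$ to some subset $A\subseteq S^i$ exceed $s\pi_A$ by at least $\gamma$ with $\gamma$ of order $a s / r_{\max}$; more carefully one writes the deviation of the reward average as a sum over states of $r^i_y$ times the deviation of the normalized visit count, so that a union bound over the $|S^i|$ states reduces matters to Lemma \ref{lemma3} applied with $\gamma \approx \sqrt{L\ln t/s}\cdot s/(|S^i|\,r_{\min})$ per state (this is where $|S^i|$ and $r_{\min}$ enter $D^i$). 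Plugging $\gamma$ into Gillman's bound gives a factor $e^{-\gamma^2\epsilon^i/20s}$; with $s\geq$ (current play count) $\geq 1$ and $\gamma^2/s \approx L\ln t/(s\,|S^i|^2 r_{\min}^2)$, the exponent is $\lesssim -\epsilon^i L \ln t/(20\,|S^i|^2 r_{\min}^2)$, and the hypothesis $L \geq 90 S_{\max}^2 r_{\max}^2/\epsilon_{\min}$ is exactly what makes this exponent $\leq -2\ln t$, i.e.\ the probability is $O(t^{-2})$ up to the polynomial prefactor $(1+\gamma\epsilon^i/10s)$ and the constant $N_{\mathbf q}$; bounding $N_{\mathbf q}\leq 1/\sqrt{\pi_{\min}}$ (or using that the chain is in stationarity if one seeds carefully) and carrying the $(1+\gamma\epsilon/10s)$ term through the sum $\sum_t t^{-2}=\beta$ produces precisely the constants $D^i$, $D^*$, and $C^i = 1+(D^i+D^*)\beta$.

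I expect the main obstacle to be the bookkeeping in this reduction from a reward-average deviation to a visit-count deviation: one must handle the sum over states cleanly (the worst case is all the deviation concentrated in the visit count of the single least-rewarding or most-rewarding state, which is why $r_{\min}$ appears in the denominator of $D^i$), track the initial-distribution constant $N_{\mathbf q}$ (the chain need not start in stationarity, since each arm is only sampled when played), and verify that the polynomial prefactor $(1+\gamma\epsilon/(10s))$ does not spoil summability — it contributes the additive $D^i+D^*$ multiple of $\beta$ rather than a multiplicative blow-up. Everything else (the decomposition into events (a)–(c), the choice of $\ell$, and the final substitution into Lemma \ref{lem:lemma2}) is routine once those deviation estimates are in hand.
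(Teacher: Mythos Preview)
Your plan is the paper's own argument: the Auer--Cesa-Bianchi--Fischer counting decomposition, with Hoeffding replaced by Gillman's bound applied via a union bound over the states of each arm, and then plugged into Lemma~\ref{lem:lemma2}. A few bookkeeping corrections where your sketch drifts: (i) when you divide the reward deviation by $r^i_y$ to pass to a visit-count deviation, the \emph{exponent} in Gillman's bound acquires $(r^i_y)^2$ in the denominator and must be bounded with $r_{\max}$, not $r_{\min}$ (it is only the polynomial prefactor $(1+\gamma\epsilon/10s)$ that picks up $r_{\min}$, which is why $r_{\min}$ sits in $D^i$); (ii) with $L=90 S_{\max}^2 r_{\max}^2/\epsilon_{\min}$ the Gillman exponent is $-4.5\ln t$, not $-2\ln t$ --- the extra slack is exactly what is consumed by the $\sqrt t$ from the prefactor and by the sums over play counts (the paper actually uses the double sum $\sum_{s}\sum_{s_i}$ via the $\min/\max$ trick, costing a factor $t^2$, so that the final decay is $t^{-2}$; your single-sum version would burn only one factor of $t$ and would in fact permit a smaller $L$, but your stated ``exactly $-2$'' would not survive even that); and (iii) for a point-mass initial distribution $N_{\mathbf q}=1/\pi_{x_0}\leq 1/\pi_{\min}$, not $1/\sqrt{\pi_{\min}}$. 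None of these affects the structure of the proof, only the constants.
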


\begin{proof}
Throughout the proof all quantities pertain to the UCB policy, which will be denoted by $\alpha$ and suppressed from the superscript whenever there is no ambiguity.  Let $\bar{r}^i(T^i(n))$ denote the sample mean of the reward collected from arm $i$ over the first $n$ plays. 
%
%
Let $c_{t,s}=\sqrt{L\ln t/s}$, and let $l$ be any positive integer. Then,
\begin{eqnarray}
&& T^i(n) = 1+\displaystyle\sum_{t=K+1}^{n} I(\alpha(t)=i) \nonumber \\
& \leq&  l+\displaystyle\sum_{t=K+1}^{n} I(\alpha(t)=i, T^i(t-1) \geq l)  \nonumber \\
& \leq&  l+\displaystyle\sum_{t=K+1}^{n} I\left(\gamma^i(t,l), T^i(t-1) \geq l\right) \nonumber \\
&\leq&  l+\sum_{t=K+1}^{n} I\left(\zeta^i(t,l)\right) \nonumber \\
&\leq&  l+ \displaystyle\sum_{t=1}^{\infty}\sum_{s=1}^{t-1}\sum_{s_i=l}^{t-1} I(\bar{r}^*(s)+c_{t,s} \leq \bar{r}^i(s_i)+c_{t,s_i}), \label{eqn1}
\end{eqnarray} 
where $\gamma^i(t,l)$ is the event that 
\begin{eqnarray*}
\bar{r}^*(T^*(t-1))+c_{t-1, T^*(t-1)} \leq \bar{r}^i(T^i(t-1)) + c_{t-1,T^i(t-1)},
\end{eqnarray*}
$\zeta^i(t,l)$ is the event that
\begin{eqnarray*}
\min_{0< s< t}(\bar{r}^*(s)+c_{t-1,s}) \leq \max_{l< s_i< t} (\bar{r}^i(s_i)+c_{t-1,s_i}).
\end{eqnarray*}

We now show that $\bar{r}^*(s)+c_{t,s} \leq \bar{r}^i(s_i)+c_{t,s_i}$ implies that at least one of the following holds.
\begin{eqnarray}
\bar{r}^*(s) &\leq& \mu^* - c_{t,s} \label{eqn2}\\ 
\bar{r}^i(s_i) &\geq& \mu^i+c_{t,s_i} \label{eqn3}\\
\mu^* &<& \mu^i+2c_{t,si} \label{eqn4}.
\end{eqnarray}
This is because if none of the above holds, then we must have 
\begin{eqnarray*}
\bar{r}^*(s)+c_{t,s} > \mu^* \geq \mu^i+2c_{t,si} > \bar{r}^i(s_i)+c_{t,s_i} ~, 
\end{eqnarray*}
which contradicts $\bar{r}^*(s)+c_{t,s} \leq \bar{r}^i(s_i)+c_{t,s_i}$. 

If we choose $s_i \geq 4L\ln n/(\mu^*-\mu^i)^2$, then
%
$2c_{t,si}\leq \mu^*- \mu^i$, 
which means (\ref{eqn4}) is false, and therefore at least one of (\ref{eqn2}) and (\ref{eqn3}) has to be true with this choice of $s_i$.  
%
We next take $l=\left\lceil \frac{4L\ln n}{(\mu^*-\mu^i)^2}\right\rceil$, and proceed from (\ref{eqn1}).  Taking expectation on both sides gives: 
\begin{eqnarray*}
E[T^i(n)] &\leq& \left\lceil \frac{4L\ln n}{(\mu^*-\mu^i)^2}\right\rceil \\
&+& \displaystyle\sum_{t=1}^{\infty}\sum_{s=1}^{t-1}\sum_{s_i=\left\lceil \frac{4L\ln n}{(\mu^*-\mu^i)^2}\right\rceil}^{t-1} P(\bar{r}^*(s)\leq\mu^*-c_{t,s}) \\
&+& \displaystyle\sum_{t=1}^{\infty}\sum_{s=1}^{t-1}\sum_{s_i=\left\lceil \frac{4L\ln n}{(\mu^*-\mu^i)^2}\right\rceil}^{t-1} P(\bar{r}^i(s_i) \geq \mu^i+c_{t,s_i}). 
\end{eqnarray*}

Consider an initial distribution ${\bf q}^i$ for the $i$th arm.  We have: 
\begin{eqnarray*}
N_{\mathbf{q}^i}=\left\|\left(\frac{q_{y}^i}{\pi_{y}^i},y\in S^i\right)\right\|_2 \leq \sum_{y \in S^i} \left\|\frac{q_{y}^i}{\pi_{y}^i}\right\|_2+ \leq \frac{1}{\pi_{\min}} , 
\end{eqnarray*}
where the first inequality follows from Minkowski inequality.
Let $n^i_y(t)$ denote the number of times state $y$ of arm $i$ is observed up to time $t$. Then,

\begin{eqnarray} 
&& P(\bar{r}^i(s_i) \geq \mu^i+c_{t,s_i}) \nonumber \\
&=& P\left( \sum_{y \in S^i} r^i_y n^i_y(s_i) \geq s_i  \sum _{y \in S^i} r^i_y \pi^i_y+s_i c_{t,s_i} \right) \nonumber \\
&=& P\left( \sum_{y \in S^i} (r^i_y n^i_y(s_i) -r^i_y s_i \pi^i_y ) \geq s_i c_{t,s_i} \right) \nonumber \\
&=& P\left( \sum_{y \in S^i} (-r^i_y n^i_y(s_i) + r^i_y s_i \pi^i_y ) \leq - s_i c_{t,s_i} \right) \label{correction1}
\end{eqnarray}
Consider a sample path $\omega$ and consider the events
\begin{eqnarray*}
A &=&\left\{\omega : \sum_{y \in S^i} (-r^i_y n^i_y(s_i)(\omega) + r^i_y s_i \pi^i_y ) \leq - s_i c_{t,s_i} \right\} \\
B&=& \bigcup_{y \in S^i} \left\{\omega : - r^i_y n^i_y(s_i)(\omega) + r^i_y s_i \pi^i_y \leq - \frac{s_i c_{t,s_i}} {|S^i|} \right\}
\end{eqnarray*}
If $\omega \notin B$ then,
\begin{eqnarray*}
- r^i_y n^i_y(s_i)(\omega) + r^i_y s_i \pi^i_y > - \frac{s_i c_{t,s_i}}{|S^i|}, \ \forall y \in S^i \\
\Rightarrow \sum_{y \in S^i} (-r^i_y n^i_y(s_i)(\omega) + r^i_y s_i \pi^i_y ) > - s_i c_{t,s_i}
\end{eqnarray*}
Thus $\omega \notin A$ so $P(A) \leq P(B)$. Then continuing from (\ref{correction1}) we have 
\begin{eqnarray}
&& P(\bar{r}^i(s_i) \geq \mu^i+c_{t,s_i}) \nonumber \\
&\leq& \sum_{y \in S^i} P\left( - r^i_y n^i_y(s_i) + r^i_y s_i \pi^i_y  \leq - \frac{s_ic_{t,s_i}}{|S^i|} \right) \\
&=& \sum_{y \in S^i} P\left( r^i_y n^i_y(s_i) - r^i_y s_i \pi^i_y  \geq \frac{s_ic_{t,s_i}}{|S^i|} \right) \\
&\leq& \sum_{y \in S^i} \left(1+ \frac{\epsilon^i\sqrt{L\ln t/ s_i}}{10 |S^i| r^i_y}\right)N_{\mathbf{q}^i}t^{-\frac{L\epsilon^i}{20(|S^i| r^i_y)^2}}  \label{eqn5} \\
&\leq& \sum_{y \in S^i} \left(1+ \frac{\epsilon_{\max}\sqrt{Lt}}{10 |S^i| r_{\min}}\right)N_{\mathbf{q}^i} t^{-\frac{L\epsilon_{\min}}{20 S^2_{\max} r^2_{\max}}} \nonumber \\ 
&\leq& \sum_{y \in S^i} \sqrt{t} \left(1+ \frac{\epsilon_{\max}\sqrt{L}}{10r_{\min}}\right)N_{\mathbf{q}^i} t^{-\frac{L\epsilon_{\min}}{20 S^2_{\max} r^2_{\max}}} \nonumber \\ 
&\leq& \frac{|S^i|}{\pi_{\min}} \left(1+ \frac{\epsilon_{\max}\sqrt{L}}{10 |S^i| r_{\min}}\right)t^{-\frac{L\epsilon_{\min}-10 S^2_{\max} r^2_{\max}}{20 S^2_{\max} r^2_{\max}}}  ~~\label{eqn8}
\end{eqnarray}
where (\ref{eqn5}) follows from Lemma \ref{lemma3}.  Similarly, we have 
\begin{eqnarray} 
&& P(\bar{r}^*(s)\leq\mu^*-c_{t,s}) \nonumber \\
&= & P(\displaystyle \sum_{y \in |S^*|} r^*_y (n^*_y(s)-s \pi^*_y) \leq -sc_{t,s}) \nonumber \\
&\leq& \sum_{y \in |S^*|} P( r^*_y n^*_y(s)- r^*_y s \pi^*_y \leq -sc_{t,s} ) \nonumber \\
&=& \sum_{y \in |S^*|} P((s-\displaystyle\sum_{x \neq y} n^*_x(s)) -  s(1- \displaystyle\sum_{x \neq y} \pi^*_x) \leq -\frac{sc_{t,s}}{r^*_y}) \nonumber \\
&=& \sum_{y \in |S^*|} P(r^*_y \displaystyle\sum_{x \neq y} n^*_x(s)- r^*_y s\displaystyle\sum_{x \neq y} \pi^*_x \geq sc_{t,s})  \nonumber \\
&\leq& \sum_{y \in |S^*|} \left(1+ \frac{\epsilon^*\sqrt{L\ln t/ s}}{10 |S^*| r^*_y}\right)N_{\mathbf{q}^*}t^{-\frac{L\epsilon^*}{20(|S^*| r^*_y)^2}} \label{eqn6} \\
&\leq& \frac{|S^*|}{\pi_{\min}} \left(1+ \frac{\epsilon_{\max}\sqrt{L}}{10 |S^*| r_{\min}}\right)t^{-\frac{L\epsilon_{\min}-10 S^2_{\max} r^2_{\max}}{20 S^2_{\max} r^2_{\max}}} \label{eqn7}  
\end{eqnarray}
where (\ref{eqn6}) again follows from Lemma \ref{lemma3}.
Then from (\ref{eqn8}) and (\ref{eqn7}), we have 
\begin{eqnarray}
&& E[T^i(n)] \leq \frac{4L\ln n}{(\mu^*-\mu^i)^2}+1 \nonumber \\
&+& (D^i+D^*) \sum_{t=1}^{\infty} \sum_{s=1}^{t-1} \sum_{s_i=1}^{t-1} t^{-\frac{L\epsilon_{\min}-10 S^2_{\max} r^2_{\max}}{20 S^2_{\max} r^2_{\max}}} \nonumber \\
&=& \frac{4L\ln n}{(\mu^*-\mu^i)^2}+1+ (D^i+D^*) \sum_{t=1}^{\infty} t^{-\frac{L\epsilon_{\min}-50 S^2_{\max} r^2_{\max}}{20 S^2_{\max} r^2_{\max}}}  \nonumber \\
&\leq& \frac{4L\ln n}{(\mu^*-\mu^i)^2}+1+ (D^i+D^*)\beta, \label{corrected2}
\end{eqnarray}
where
\begin{eqnarray}
D^i &=& \frac{|S^i|}{\pi_{\min}} \left(1+ \frac{\epsilon_{\max}\sqrt{L}}{10 |S^i| r_{\min}}\right), \nonumber \\
\beta &=& \sum_{t=1}^{\infty} t^{-2}, \nonumber
\end{eqnarray}
and the inequality in (\ref{corrected2}) follows from the assumption $L \geq 90 S^2_{\max} r^2_{\max}/\epsilon_{\min}$.
Thus we have obtained the following bound: 
\begin{eqnarray}
&&\displaystyle\sum_{i:\mu^i < \mu^*} (\mu^*-\mu^i) E[T^i(n)] \nonumber \\
&\leq& 4L\displaystyle\sum_{\mu^i < \mu^*} \frac{\ln n}{( \mu^*-\mu^i)} + \displaystyle\sum_{i:\mu^i<\mu^*} (\mu^*-\mu^i) C^i \label{eqn10}
\end{eqnarray}
where
\begin{eqnarray*}
C^i &=& 1+ (D^i + D^*)\beta.
\end{eqnarray*}
Using (\ref{eqn10}) in Lemma \ref{lem:lemma2} completes the proof. 
\end{proof}
%

\section{Discussion}\label{sec:discussion}

We have bounded the regret of the UCB policy uniformly over time by $\ln n$. While of the same order, this bound may be worse than the asymptotic bound given in \cite{anantharam1} in terms of the constant.  However, it holds uniformly over time and we have a very simple index based on the sample mean. The index policy in \cite{anantharam1} depends on all the previous sequence of observations and the calculation of the index requires integration over the parameter space and finding an infimum of a function over the set of parameters. 

The bound we have in Theorem \ref{theorem1} depends on the stationary distributions, eigenvalue gap from the arms and rewards. 
Note that the validity of this bound relies on selecting a sufficiently large value for the constant $L$ that requires the knowledge of the smallest eigenvalue gap.  
If we know a priori (or with high confidence) that there exist $c_1$, $c_2$ and $c_3$ such that $\epsilon_{\min}\geq c_1 > 0$, $0 < r_{\max} \leq c_2$ and $S_{\max} \leq c_3$ then setting $L=90 c_3^2 c_2^2/ c_1$ will be sufficient.  While this is a sufficient condition for the bound to hold and not necessary,  similar results under a weaker condition are not yet available.   

Selecting a large $L$ will increase the magnitude of the exploration component of the index which depends only on the current time and the total number of times the corresponding arm has been selected up to the current time. This means that the rate of exploration will increase, but the regret will remain logarithmic with time.  The only things that change are the constant and the multiplicative factor of the logarithmic term of the bound. 

In general the eigenvalue gap is a complex expression of the components of the stochastic matrix.  It can be simplified in special cases. In next section we give an example of the index policy. 

%
%


\section{An Example} \label{sec:app}

Consider a player who plays one of $K$ machines at each time.  Each machine can be in one of two states ``1'' and ``0'' and is modeled as an irreducible and aperiodic Markov chain. 
This requirement along with time reversibility is satisfied if $p^i_{00}>0, p^i_{11}>0$,  $i=1, 2, \cdots, K$. The stationary distribution of machine $i$ is 
\begin{eqnarray*}
\boldsymbol{\pi}^i=[\pi^i_0, \pi^i_1]=
\left[\frac{p_{10}^i}{p_{10}^i+p_{01}^i},\frac{p_{01}^i}{p_{10}^i+p_{01}^i}\right] ~, 
\end{eqnarray*}
and the eigenvalue gap is,
\begin{eqnarray*}
\epsilon^i=p_{10}^i+p_{01}^i ~. 
\end{eqnarray*}

Figures \ref{figure1} (S.1) and \ref{figure5} (S.2) show the simulation results for $5$ arms averaged over 100 runs with parameters given in table \ref{table1}. $90 S^2_{\max} r_{\max}^2/\epsilon_{\min}$ is $1458$ for S.1 and $1688.2$ for S.2.
\begin{table}
\begin{center}
    \begin{tabular}{ | l | l | l | l |  }
    \hline
    S.1  & $p_{01}$, $p_{10}$ & $r_0$, $r_1$ &$\pi_1$, $\mu$ \\ \hline
    ch.1 & .3, .5 & 1, 1.2 & .3750, 1.075 \\ \hline
    ch.2 & .2, .6 & 1, 1.7 & .2500, 1.175 \\ \hline
    ch.3 & .6, .3 & 1, 1.5 & .6667, 1.333 \\ \hline
    ch.4 & .7, .2 & 1, 1.8 & .7778, 1.622 \\ \hline
    ch.5 & .4, .8 & 1, 1.3 & .3333, 1.100 \\ \hline
    S.2  & $p_{01}$, $p_{10}$ & $r_0$, $r_1$ & $\pi_1$, $\mu$ \\ \hline
    ch.1 & .0001, .9975 & 1, 2 &.0001, 1.000  \\ \hline
    ch.2 & .0010, .9900 & 1, 2 &.0010, 1.001  \\ \hline
    ch.3 & .3430, .5100 & 1, 2 &.4021, 1.402  \\ \hline
    ch.4 & .1250, .7500 & 1, 2 &.1429, 1.143  \\ \hline
    ch.5 & .0270, .9100 & 1, 2 &.0288, 1.029  \\ \hline
    \end{tabular}
\end{center}
\caption{Parameters of the arms for S.1 and S.2}
\label{table1}
\end{table}

In figure \ref{figure1}, we see that the performance is better when $L=2$ (which violates our sufficient condition), compared to $L=2000 > 90 S^2_{\max} r_{\max}^2/\epsilon_{\min}$ which satisfies the condition of theorem \ref{theorem1}. The bound from theorem \ref{theorem1} in this case is $45150 \ln n + 62.8$.
As $\epsilon$ becomes smaller the Gillman bound becomes loser.  However, our results for the two-state arms suggest that even when $L$ is small (e.g., $L=2$) compared to $90 S^2_{\max} r_{\max}^2/\epsilon_{\min}$, the UCB policy works well (and indeed better as suggested by our numerical results); the resulting regret is at most logarithmic with $n$.  This seems to suggest that UCB's regret can be bounded logarithmically under any value of $L$ for two-state irreducible Markov chains.

We next compare the performance of UCB with the index policy given in \cite{anantharam1} assuming the player is restricted to playing one arm at a time. We generate the transition probability functions parametrized by $\theta \in [0,10]$, satisfying the conditions in \cite{anantharam1}. The parameter set is $\boldsymbol\theta=[0.5, 1, 7, 5, 3]$ where $i$th element corresponds to the parameter of arm $i$. Moreover, $\mu(\theta)$ is increasing in $\theta$, and $p_{01}(\theta)$ and $p_{10}(\theta)$ are log-concave in $\theta$ by letting 
\begin{eqnarray*}
p_{10}(\theta)=1-(\frac{\theta}{10})^2, \\
p_{01}(\theta)=(\frac{\theta}{10})^3.
\end{eqnarray*}
Any policy for which $R^\alpha(n)=o(n^\gamma)$ for every $\gamma>0$ is called a {\em uniformly good} policy by \cite{anantharam1}. It was shown that for any {\em uniformly good} policy $\alpha$,
\begin{eqnarray}
\liminf_{n \rightarrow \infty} \frac{R^\alpha(n)}{\ln n} \geq \sum_{j:\mu^j < \mu^*} \frac{\mu^* - \mu^j}{I(j,*)}, \label{bound1}
\end{eqnarray} 
where
\begin{eqnarray*}
I(j,*)=\sum_{x\in S} \pi^j_x \sum_{y \in S} p_{xy}(\theta^j) \ln \frac{p_{xy}(\theta^j)}{p_{xy}(\theta^*)} ~, 
\end{eqnarray*}
and that the index policy $\alpha^*$ in \cite{anantharam1} satisfies 
\begin{eqnarray*}
\limsup_{n \rightarrow \infty} \frac{R^{\alpha^*}(n)}{\ln n} \leq \sum_{j:\mu^j < \mu^*} \frac{\mu^* - \mu^j}{I(j,*)} ~. 
\end{eqnarray*}

Figure \ref{figure3} shows $p_{01}, p_{10},\mu$ we used in this set of experiments; Figure \ref{figure4} shows  $\ln p_{01}, \ln p_{10}$  as functions of $\theta$. Figure \ref{figure5} compares the regret of the index policy of \cite{anantharam1} (labeled as Anantharam's policy in the figure) with UCB under different values of $L$.
Note that the index policy of \cite{anantharam1} assumes the knowledge of $p_{01}(\theta)$ and $p_{10}(\theta)$, while in UCB these functions are unknown to the player. Simulation for $L= 1500 > 90 S^2_{\max} r_{\max}^2/\epsilon_{\min}$ satisfies the sufficient condition in theorem \ref{theorem1} for the bound to hold. The bound from theorem \ref{theorem1} for this case is $39846 \ln n + 45$, while Anantharam's bound is $4.406 \ln n$. 
             
The first thing to note is the gap between the bound we derived for UCB and the bound of \cite{anantharam1} given in (\ref{bound1}).  The second thing to note is that for $L=0.05$ UCB has smaller regret than the index policy of \cite{anantharam1}, as well as the bound in (\ref{bound1}), for the given time horizon.  Note that \cite{anantharam1} proved that the performance of any {\em uniformly good} policy cannot be better than the bound in (\ref{bound1}) asymptotically.  Since {\em uniformly good} policies have the minimum growth of regret among all policies,  this bound also holds for UCB.  This however is not a contradiction because this bound holds asymptotically; we indeed expect the regret of UCB with $L=0.05$ to be very close to this bound in the limit.  These results show that while the bound in \cite{anantharam1} is better than the bound we proved for UCB in this paper, in reality the UCB policy can perform very close to the tighter bound (uniformly, not just asymptotically).

\begin{figure}
\includegraphics[width=3.5in]{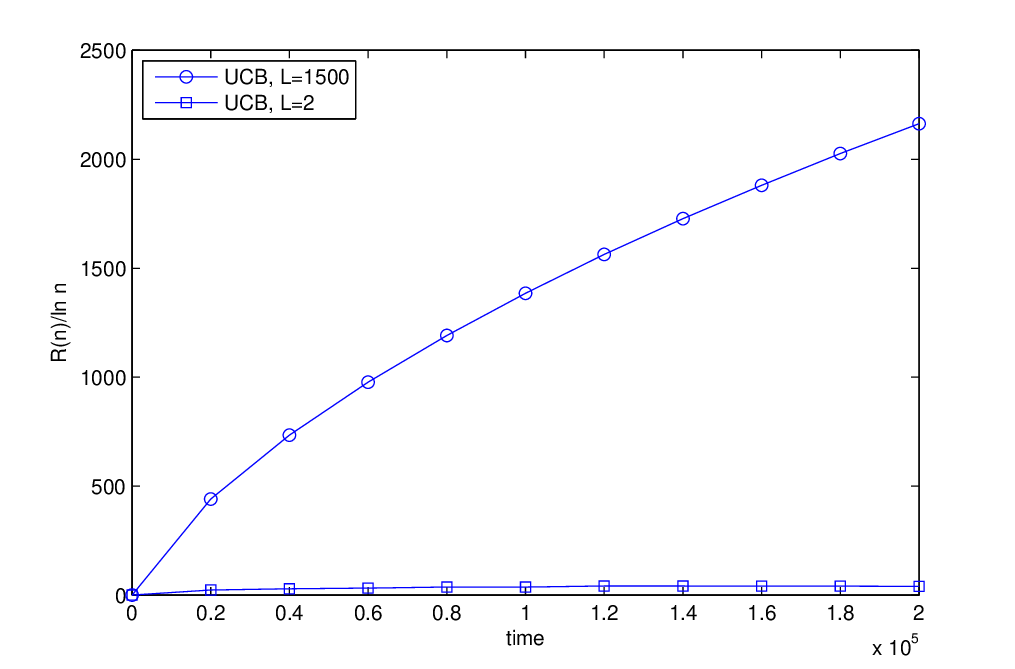}
\caption{Regret of UCB for S.1 }
\label{figure1}
\end{figure}

\begin{figure}
\includegraphics[width=3.5in]{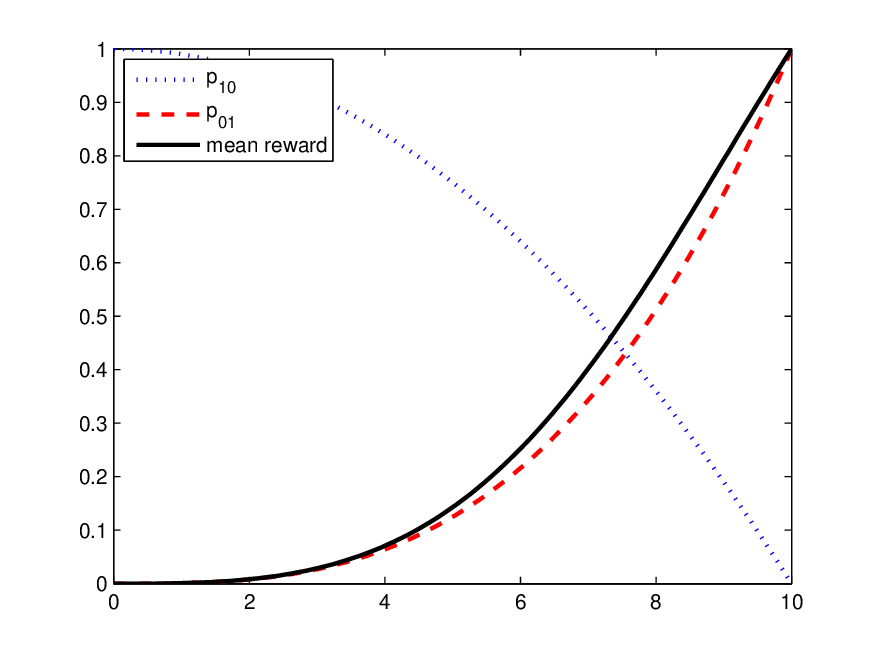}
\caption{$p_{01}, p_{10},\mu$ as functions of $\theta$}
\label{figure3}
\end{figure}

\begin{figure}
\includegraphics[width=3.5in]{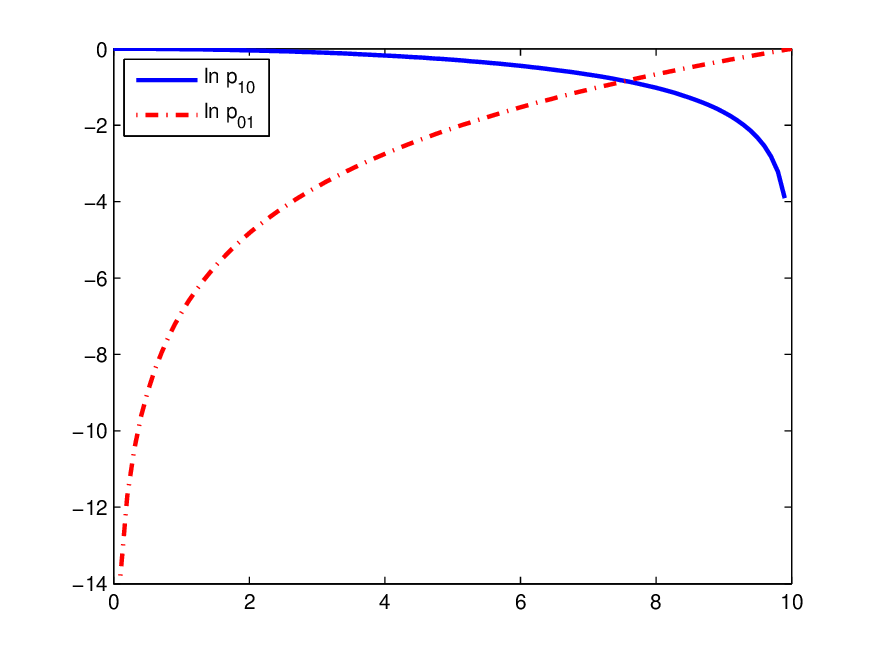}
\caption{$\ln p_{01}, \ln p_{10}$ as functions of $\theta$}
\label{figure4}
\end{figure}

\begin{figure}
\includegraphics[width=3.5in]{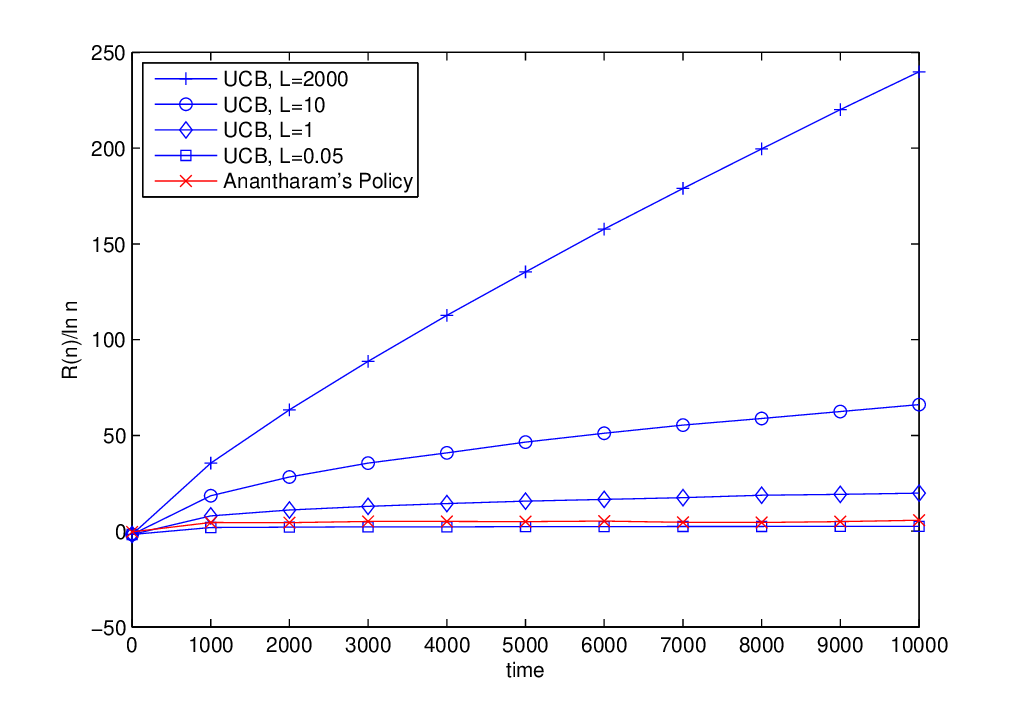}
\caption{Regrets of UCB and Anantharam's policy for S.2}
\label{figure5}
\end{figure}

\section{Conclusion} \label{sec:conc}

In this study we considered the multi-armed bandit problem with Markovian rewards, and proved that a sample mean based index policy achieves logarithmic regret uniformly over time provided that an exploration constant is sufficiently large with respect to the eigenvalue gaps of the stochastic matrices of the arms. 
An example was presented for a special case of two-state Markovian reward models.  Numerical results suggest that in this case order optimality of the index policy holds even when the sufficient condition on the exploration constant does not hold.


\begin{thebibliography}{1}

\bibitem{robbins2}
H. Robbins, \textit{Some aspects of the sequential design of experiments,} Bull. Amer. Math. Soc., 55, pp. 527-535, 1952

\bibitem{auer2}
P. Auer, N. Cesa-Bianchi, Y. Freund, and R.E. Schapire, \textit{The nonstochastic multiarmed
bandit problem,} SIAM Journal on Computing, 32:48-77, 2002.

\bibitem{lai1}
T. Lai, H. Robbins, \textit{Asymptotically efficient adaptive allocation rules,} Advances in Applied Mathematics, 1985, 6 ,4-22

\bibitem{anantharam2}
V. Anantharam, P. Varaiya, J . Walrand, \textit{Asymptotically efficient
allocation rules for the multiarmed bandit problem with multiple
plays-Part I: IID rewards,} IEEE Trans. Automat. Contr., pp. 968-975, 1987

\bibitem{anantharam1}
V. Anantharam, P. Varaiya, J . Walrand, \textit{Asymptotically efficient
allocation rules for the multiarmed bandit problem with multiple
plays-Part II: Markovian rewards,} IEEE Trans. Automat. Contr.,
pp. 977-982, November 1987

\bibitem{agrawal1}
R. Agrawal, \textit{Sample Mean Based Index Policies with O(log n) Regret for the Multi-Armed Bandit Problem,} Advances in Applied Probability, Vol. 27, No. 4, pp. 1054-1078, December 1995.


\bibitem{auer}
P. Auer, N. Cesa-Bianchi, and P. Fischer. \textit{Finite-time analysis of the multiarmed bandit
problem,} Machine Learning, 47(2/3):235-256, 2002.

\bibitem{liu2}
K. Liu and Q. Zhao, \textit{Distributed Learning in Multi-Armed Bandit with
Multiple Players,} available at http://arxiv.org/abs/0910.2065.

\bibitem{anandkumar}
A. Anandkumar, N. Michael, AK Tang, \textit{Opportunistic Spectrum Access with Multiple Players: Learning under Competition,} Proc. of IEEE INFOCOM, March 2010.

\bibitem{gittins1}
J.C. Gittins, \textit{Bandit Processes and Dynamic Allocation Indices,} Journal of the Royal Statistical Society, Series B, Vol. 41, No.2, pp 148-177, 1979.

\bibitem{gillman}
D. Gillman, \textit{A chernoff bound for random walks on expander graphs,}
Proc. 34th IEEE Symp. on Foundations of Computer Science (FOCS93),
vol. SIAM J. Comp., Vol 27, No 4, 1998.

\bibitem{lezaud}
P. Lezaud, \textit{Chernoff-type Bound for Finite Markov Chains,} Ann. Appl. Prob. 8, pp 849-867, 1998.

\end{thebibliography}
\end{document}